\newcommand{\T}{{\cal T}}
\newcommand{\set}[1]{\left\{#1\right\}}
\newcommand {\cp}{\mathfrak{X}(\pi (M))}
\newcommand {\N}{\mathcal{N}}
\def\Section#1{\vspace{30truept}\addtocounter{section}{1}\setcounter{thm}{0}\setcounter{equation}{0}
{\noindent\Large\bf\arabic{section}.#1}\par \vspace{12pt}}
\newtheorem{thm}{Theorem}[section]
\newtheorem{cor}[thm]{Corollary}
\newtheorem{lem}[thm]{Lemma}
\newtheorem{defn}[thm]{Definition}
\newtheorem{rem}[thm]{Remark}
\numberwithin{equation}{section}
\begin{document}
\title{\bf{A note on \lq\lq Sur le noyau de l'op\'{e}rateur de courbure d'une vari\'{e}t\'{e} finsl\'{e}rienne, C. R. Acad. Sci. Paris, s\'er. A, t. 272 (1971), 807-810}\rq\rq \footnote{ArXiv Number: 1305.4498 [math.DG]}}
\author{\bf{ Nabil L. Youssef$^{\,1,2}$ and S. G.
Elgendi$^{3}$}}
\date{}
\maketitle                     
\vspace{-1.16cm}
\begin{center}
{$^{1}$Department of Mathematics, Faculty of Science,\\ Cairo
University, Giza, Egypt}

\vspace{5pt}
$^{2}$Center for Theoretical Physics (CTP)\\
at the Britich University in Egypt (BUE)

\vspace{5pt}
{$^{3}$Department of Mathematics, Faculty of Science,\\ Benha
University, Benha,
 Egypt}
\end{center}

\begin{center}
E-mails: nlyoussef@sci.cu.edu.eg, nlyoussef2003@yahoo.fr\\
{\hspace{1.8cm}}salah.ali@fsci.bu.edu.eg, salahelgendi@yahoo.com
\end{center}
\smallskip
\vspace{1cm} \maketitle
\smallskip
{\vspace{-1.1cm}}
\noindent{\bf Abstract.}
In this note, adopting  the pullback formalism of global Finsler geometry,  we show by a\linebreak counterexample  that the kernel $\text{Ker}_R$ of the h-curvature $R$ of Cartan connection and the associated nullity distribution $\N_R$ do not coincide, contrary to  Akbar-Zadeh's result \cite{akbar.nul3.}.
We also give sufficient conditions for $\text{Ker}_R$ and $\N_R$ to coincide.

\medskip\noindent{\bf Keywords:\/}  Cartan connection, h-curvature tensor,  Nullity distribution, Kernel distribution.

\medskip\noindent{\bf  MSC 2010:\/} 53C60,
53B40, 58B20,  53C12.

\Section{ Introduction and notations }

\par

Nullity distribution in Finsler geometry has been investigated in \cite{akbar.nul3.} (adopting  the pullback formalism) and  \cite{Nabil.2} (adopting  the Klein-Grifone   formalism).
In 1971, Akbar-Zadeh \cite{akbar.nul3.}  proved that the kernel $\text{Ker}_R$ of the h-curvature operator $R$ of Cartan connection coincides with the nullity distribution $\N_R$ of that operator. This result was reappeared again in \cite{akbar.null.2} and was used to prove that the nullity foliation is auto-parallel. Moreover,  Bidabad and Refie-Rad \cite{bidabad} generalized this result to the case of k-nullity distribution following the same pattern of proof as Akbar-Zadeh's.
\par In this note, we show by a counterexample that $\text{Ker}_R$ and $\N_R$ do not coincide, contrary to Akbar-Zadeh's result. In addition, we find sufficient conditions for $\text{Ker}_R$ and $\N_R$ to coincide.

In what follows, we denote by $\pi: \T M\longrightarrow M$ the subbundle of nonzero vectors
tangent to $M$, $\pi_*: T(\T M)\longrightarrow TM$ the linear tangent map of $\pi$ and   $V_z(TM)=(\text{Ker}\,\pi_*)_z$ the vertical space at $z\in \T M$.
Let $\mathfrak{F}(TM)$ be the algebra of $C^\infty$ functions on $TM$ and $\cp$ the $\mathfrak{F}(TM)$-module of differentiable sections of the pullback bundle $\pi^{-1}(T M)$.
The elements of $\mathfrak{X}(\pi (M))$ will be called $\pi$-vector fields and denoted by barred letters $\overline{X}$.
The fundamental $\pi$-vector field is the $\pi$-vector field
$\overline{\eta}$ defined by $\overline{\eta}(z)=(z,z)$ for all
$z\in \T M$.

Let $D$ be  a linear connection  on the pullback bundle $\pi^{-1}(TM)$.
 Let $K$ be the map defined by
\vspace{-0.1cm} $K:T (\T M)\longrightarrow\pi^{-1}(TM):X\longmapsto D_X \overline{\eta}$.
The vector space $H_z (T M):= \{ X \in T_z(\T M) : K(X)=0 \}$ is the horizontal space to $M$ at $z$. The restriction of $\pi_*$ on $H_z (T M)$, denoted again $\pi_*$, defines an isomorphism between $H_z(TM)$ and $T_{\pi z}M$.
   The connection $D$ is said to be regular if
$ T_z (\T M)=V_z (T M)\oplus H_z (T M) \,\,  \forall \, z\in \T M$. In this case $K$ defines an isomorphism between $V_z(TM)$ and $T_{\pi z}M$.

 If $M$ is endowed with a regular connection, then the preceding decomposition permits to write uniquely a vector $X\in T_z(\T M)$ in the form $X=hX+vX$, where $hX\in H_z(TM)$ and $vX\in V_z(TM)$.
 The ((h)hv-) torsion tensor of $D$, denoted by $ T$, is  defined by
$ T(\overline{X},\overline{Y})=\textbf{T}(v{X},h {Y}), \text{for all}\,\,
\overline{X},\overline{Y}\in\mathfrak{X} (\pi (M)),$
where $\textbf{T}(X,Y)=D_X\overline{Y}-D_Y\overline{X}-\pi_*[X,Y]$ is the (classical) torsion  associated with $D$ and  $\overline{X}=\pi_* X$ (the fibers of the pullback bundle are isomorphic to the fibers of the tangent bundle).
The  h-curvature tensor
of $D$, denoted by $R$, is defined by
$R(\overline{X},\overline{Y})\overline{Z}=\textbf{K}(h
 {X},h  {Y})\overline{Z}, $
 where $\textbf{K}(X,Y)\overline{Z}=D_XD_Y\overline{Z}-D_YD_X\overline{Z}-D_{[X,Y]}\overline{Z}$
is the (classical) curvature   associated with $D$. The  contracted  curvature    $\widehat{R}$ is defined by
$\widehat{R}(\overline{X},\overline{Y})={R}(\overline{X},\overline{Y})\overline{\eta}.$


\Section{ Kernel and nullity distributions: Counterexample }

Let $(M,F)$ be a Finsler manifold. Let $\nabla$ be the Cartan connection associated with $(M,F)$.
It is well known that $\nabla$ is the unique metrical regular connection  on $\pi^{-1}(TM)$ such
that $g(T(\overline{X},\overline{Y}), \overline{Z})=g(T(\overline{X},\overline{Z}),\overline{Y})$ \cite{akbar.null.2}, \cite{r94}.
Note that the bracket $[X,Y]$ is horizontal if and only if $\widehat{R}(\overline{X},\overline{Y})=~0$, where $\widehat{R}$ is the contracted curvature of the $h$-curvature tensor of $\nabla$.

\begin{lem}\cite{akbar.null.2} \label{lem} Let $\textbf{T}$ and $\textbf{K}$ be the \emph{(}classical\emph{)} torsion and curvature tensors of $\nabla$  respectively.  We have:
$$\mathfrak{S}_{X,Y,Z}\{\textbf{K}(X,Y) \overline{Z}-\nabla_Z\textbf{T}(X,Y)-\textbf{T}(X,[Y,Z])\}=0,$$
where the symbol $\mathfrak{S}_{X,Y,Z}$ denotes cyclic sum over
$X, Y, Z\in \mathfrak{X}(TM)$.
\end{lem}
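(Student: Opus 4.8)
The plan is to prove this as a Bianchi-type identity by directly expanding the three constituent terms through the definitions of $\textbf{T}$ and $\textbf{K}$, and then to observe that, after forming the cyclic sum, everything cancels except a double-bracket expression that vanishes by the Jacobi identity. I would first note that no special feature of the Cartan connection is actually needed here: the identity holds for \emph{any} linear connection $\nabla$ on $\pi^{-1}(TM)$. The only structural facts I would use are the defining formulas $\textbf{T}(X,Y)=\nabla_X\overline{Y}-\nabla_Y\overline{X}-\pi_*[X,Y]$ and $\textbf{K}(X,Y)\overline{Z}=\nabla_X\nabla_Y\overline{Z}-\nabla_Y\nabla_X\overline{Z}-\nabla_{[X,Y]}\overline{Z}$, together with the identification $\pi_*[X,Y]=\overline{[X,Y]}$ and the $\mathbb{R}$-linearity of $X\mapsto\overline{X}=\pi_*X$.

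The first step is substitution. Since $\textbf{T}(X,Y)$ is a $\pi$-vector field, the symbol $\nabla_Z\textbf{T}(X,Y)$ denotes the covariant derivative of that section, so $\nabla_Z\textbf{T}(X,Y)=\nabla_Z\nabla_X\overline{Y}-\nabla_Z\nabla_Y\overline{X}-\nabla_Z\overline{[X,Y]}$, while $\textbf{T}(X,[Y,Z])=\nabla_X\overline{[Y,Z]}-\nabla_{[Y,Z]}\overline{X}-\overline{[X,[Y,Z]]}$. Collecting, the summand $\textbf{K}(X,Y)\overline{Z}-\nabla_Z\textbf{T}(X,Y)-\textbf{T}(X,[Y,Z])$ becomes a sum of terms of four types: second-order derivatives $\nabla_a\nabla_b\overline{c}$, derivatives along a bracket direction $\nabla_{[b,c]}\overline{a}$, derivatives of a bracket section $\nabla_a\overline{[b,c]}$, and the single double-bracket term $\overline{[X,[Y,Z]]}$.

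The heart of the argument is then the cyclic bookkeeping. Under $\mathfrak{S}_{X,Y,Z}$ I would check the three derivative families in turn: the six second-order terms coming from $\textbf{K}$ match, with opposite sign, the six coming from $\nabla_Z\textbf{T}$ and cancel in pairs; the three terms $\nabla_a\overline{[b,c]}$ produced by the last piece of $\nabla_Z\textbf{T}$ cancel against the three terms $\nabla_a\overline{[b,c]}$ arising from $\textbf{T}(X,[Y,Z])$; and the three terms $\nabla_{[b,c]}\overline{a}$ coming from $\textbf{K}$ cancel against the three from $\textbf{T}(X,[Y,Z])$. What survives is precisely $\overline{[X,[Y,Z]]}+\overline{[Y,[Z,X]]}+\overline{[Z,[X,Y]]}$.

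Finally I would use the linearity of $\pi_*$ to pull the cyclic sum inside the bar, obtaining $\overline{[X,[Y,Z]]+[Y,[Z,X]]+[Z,[X,Y]]}$, which equals $\overline{0}=0$ by the Jacobi identity for the Lie bracket of vector fields on $\T M$. This closes the identity. The computation carries no genuine difficulty beyond the sign- and permutation-tracking inside the cyclic sum; the only substantive point is that the classical first Bianchi identity survives the passage to the pullback formalism, precisely because the bracket and $\pi_*$ commute in the way the above cancellations require.
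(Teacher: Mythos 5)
Your proof is correct. The paper itself offers no argument for this lemma --- it is stated with a bare citation to Akbar-Zadeh's \emph{Espaces de nullit\'{e} en g\'{e}om\'{e}trie Finsl\'{e}rienne} --- so there is no ``paper proof'' to match against; what you have done is supply the missing verification, and it checks out. Your reading of $\nabla_Z\textbf{T}(X,Y)$ as the covariant derivative of the section $\textbf{T}(X,Y)\in\mathfrak{X}(\pi(M))$ (rather than of $\textbf{T}$ as a tensor) is the only one consistent with the appearance of the bare bracket $[Y,Z]$ in the third term, and with that reading the bookkeeping closes exactly as you describe: the twelve second-order terms cancel in pairs between the cyclic sums of $\textbf{K}(X,Y)\overline{Z}$ and $-\nabla_Z\textbf{T}(X,Y)$, the terms $\nabla_{[\cdot,\cdot]}\overline{(\cdot)}$ and $\nabla_{(\cdot)}\overline{[\cdot,\cdot]}$ cancel between $\textbf{K}$, $\nabla_Z\textbf{T}$ and $\textbf{T}(X,[Y,Z])$, and the residue $\pi_*\bigl([X,[Y,Z]]+[Y,[Z,X]]+[Z,[X,Y]]\bigr)$ vanishes by Jacobi and the linearity of $\pi_*$. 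Your side remark is also accurate and worth retaining: the identity uses nothing about metricity or the symmetry of the (h)hv-torsion, so it is the first Bianchi identity for an arbitrary linear connection on $\pi^{-1}(TM)$, not a special feature of the Cartan connection --- which is consistent with the role it plays later in the note, where only its formal consequence $R(\overline{Y},\overline{Z})\overline{X}=\textbf{T}(X,[Y,Z])$ for $X\in\N_R$ is invoked.
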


Let us now define the concepts of nullity and kernel spaces associated with the curvature $\textbf{K}$ of $\nabla$, following Akbar-Zadeh's definitions \cite{akbar.nul3.}.

\begin{defn}\label{nr}
The  subspace $\mathcal{N}_\bold{K}(z)$  of $H_z(TM)$ at a point $z\in TM$ is defined by
$$\mathcal{N}_\bold{K}(z):=\{X\in H_z(TM) : \,  \bold{K}(X,Y)=0, \, \,\forall\, Y\in H_z(TM)\}.$$
The dimension of $\mathcal{N}_\bold{K}(z)$ is denoted by $\mu_\bold{K}(z)$.

The subspace  $\mathcal{N}_\bold{K}(x):=\pi_*(\mathcal{N}_\bold{K}(z))\subset T_xM$, $x=\pi z$, is linearly isomorphic to $\mathcal{N}_\bold{K}(z)$. This subspace is called the nullity space of the curvature operator $\textbf{K}$ at the point $x\in M$
 \end{defn}
\begin{defn}\label{ker}
The kernel of ${\bold{K}}$ at the point $x=\pi z$ is defined by
$$\emph{\text{Ker}}_{\bold{K}}(x):=\{\overline{X}\in \{z\}\times T_xM\simeq T_xM:  \, {\bold{K}}(Y,Z) \overline{X}=0, \, \forall\, Y,Z\in H_z(TM)\}. $$
\end{defn}
Since $\N_\bold{K}$ and  $\text{Ker}_\bold{K}$  are both defined  on the horizontal space,  we can replace the classical curvature $\textbf{K}$ by  the h-curvature tensor $R$ of Cartan connection.
Akbar-Zadeh \cite{akbar.nul3.} proved that
the nullity space $\N_\bold{K}(x)$ and  the kernel space $\text{Ker}_\bold{K}(x)$ coincide for each point $x\in M $ at which they are defined.
We show by a counterexample  that the above mentioned spaces do not coincide.
\begin{thm}
The nullity space $\N_{R}(x)$ and  the kernel space $\emph{\text{Ker}}_R(x)$ do not coincide.
\end{thm}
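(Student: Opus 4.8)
The plan is to exhibit an explicit Finsler manifold on which the two spaces genuinely differ, since no formal argument can separate them: the whole obstruction is a single symmetry of the h-curvature, and I must first isolate it. Writing $\mathcal{R}(\overline{X},\overline{Y},\overline{Z},\overline{W}):=g(R(\overline{X},\overline{Y})\overline{Z},\overline{W})$, the antisymmetry of $R$ in its first pair together with the metricity of $\nabla$ (which forces antisymmetry in the last pair) gives the dictionary
$$\overline{X}\in\N_R \iff \mathcal{R}(\overline{X},\overline{Y},\overline{Z},\overline{W})=0\ \ \forall\,\overline{Y},\overline{Z},\overline{W},\qquad \overline{X}\in\text{Ker}_R \iff \mathcal{R}(\overline{Y},\overline{Z},\overline{X},\overline{W})=0\ \ \forall\,\overline{Y},\overline{Z},\overline{W}.$$
These two conditions coincide exactly when the pair-exchange symmetry
$$\mathcal{R}(\overline{X},\overline{Y},\overline{Z},\overline{W})=\mathcal{R}(\overline{Z},\overline{W},\overline{X},\overline{Y})$$
holds, since it carries the vanishing of the first slot onto the vanishing of the third. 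In Riemannian geometry this symmetry is a consequence of the first Bianchi identity plus the two antisymmetries, which is the route underlying Akbar-Zadeh's claim; but Lemma \ref{lem} shows that for the Cartan connection the first Bianchi identity carries extra torsion terms $\nabla_Z\textbf{T}(X,Y)+\textbf{T}(X,[Y,Z])$, so pair symmetry may fail and the two spaces may separate. Note also that in dimension two no separation can occur: $R(\overline{e}_1,\overline{e}_2)$ is then the only operator, and it is either zero (both spaces are everything) or $g$-skew and hence invertible (both spaces are trivial); so I must work in dimension $n\geq 3$.

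The decisive second step is to choose a genuinely non-Riemannian metric $F$ on an open subset of $\mathbb{R}^n$, $n\geq 3$, whose Cartan h-curvature is computable in closed form. I would take a Randers- or Berwald-type metric for which the nonlinear connection, the Cartan torsion $T$, and the connection coefficients of $\nabla$ can be written down explicitly, and from these assemble the matrices of the horizontal operators $R(\overline{e}_i,\overline{e}_j)$ in a frame along a chosen direction $z\in\T M$.

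The third step is the direct comparison. With the operators $R(\overline{e}_i,\overline{e}_j)$ in hand, I would solve the linear system $R(\overline{X},\overline{Y})=0$ for all $\overline{Y}$ to obtain $\N_R(z)$, and separately solve $R(\overline{Y},\overline{Z})\overline{X}=0$ for all $\overline{Y},\overline{Z}$ to obtain $\text{Ker}_R(z)$, then project by $\pi_*$ into $T_xM$ with $x=\pi z$. By the dictionary above, the counterexample is complete once I exhibit a single point $x$ and a single $\pi$-vector $\overline{X}$ with $\mathcal{R}(\overline{X},\overline{Y},\overline{Z},\overline{W})=0$ for all $\overline{Y},\overline{Z},\overline{W}$ yet $\mathcal{R}(\overline{Y},\overline{Z},\overline{X},\overline{W})\neq 0$ for some $\overline{Y},\overline{Z},\overline{W}$, i.e.\ $\overline{X}\in\N_R(x)\setminus\text{Ker}_R(x)$; this is nothing but the failure of pair symmetry detected on one quadruple.

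The main obstacle I anticipate is computational rather than conceptual. The danger is to pick a metric so symmetric that the torsion corrections in Lemma \ref{lem} happen to cancel, restoring pair symmetry and collapsing the two spaces back together (as they must for any metric with pair-symmetric $R$, in particular any Riemannian one). The care therefore lies in keeping the Cartan tensor active enough that the extra Bianchi terms survive, while keeping $R$ sparse enough that both $\N_R$ and $\text{Ker}_R$ can be read off by elementary linear algebra and a separating vector displayed at an explicit $z\in\T M$.
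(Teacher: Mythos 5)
Your overall strategy is the same as the paper's: the theorem is established by an explicit counterexample, and your analysis of \emph{why} one should exist --- the failure of the pair-exchange symmetry $g(R(\overline{X},\overline{Y})\overline{Z},\overline{W})=g(R(\overline{Z},\overline{W})\overline{X},\overline{Y})$ caused by the torsion terms $\nabla_Z\textbf{T}(X,Y)+\textbf{T}(X,[Y,Z])$ in the Bianchi identity of Lemma \ref{lem} --- is exactly the mechanism the paper itself points to when it dissects Akbar-Zadeh's argument after the proof. Your observations that the example must live in dimension $n\geq 3$ and must be genuinely non-Riemannian (with the Cartan tensor active enough that the extra Bianchi terms survive) are also correct and match the paper's choices.

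Nevertheless there is a genuine gap: you never actually produce the example. The entire mathematical content of this theorem is the exhibition of a concrete Finsler function together with the computation showing that the two linear systems $X^jR^h_{ijk}=0$ and $Z^iR^h_{ijk}=0$ have different solution spaces; ``I would take a Randers- or Berwald-type metric for which the curvature is computable'' is a research plan, not a proof, and it is precisely at this step that the difficulty lies --- a badly chosen metric (Riemannian, locally Minkowski, or otherwise too symmetric) collapses the two spaces, as you note yourself, and a Berwald-type choice is particularly risky in this respect. The paper takes $M=\mathbb{R}^3$ and $F=\sqrt{x_3y_1\sqrt{y_2^2+y_3^2}}$ on the open set where $x_3y_1>0$ and $y_2^2+y_3^2\neq 0$, computes the six independent nonvanishing components $R^h_{ijk}$, and solves the two systems to get $\N_R(x)=\{t\,\partial/\partial x^1\}$ while $\text{Ker}_R(x)=\{t(\partial/\partial x^1-(y_2/y_1)\partial/\partial x^2-(y_3/y_1)\partial/\partial x^3)\}$, two visibly distinct lines. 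Until you supply a specific metric and carry the computation through to such a separating vector, the theorem is not proved.
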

 Let $M=\mathbb{R}^3$, $U=\{(x_1,x_2,x_3;y_1,y_2,y_3)\in \mathbb{R}^3 \times \mathbb{R}^3: x_3y_1>0,\,\, y_2^2+y_3^2\neq 0\}\subset TM$.  Let  $F$ be the Finsler function defined on $U$ by
 $$F:=\sqrt{{ x_3}{ y_1} \sqrt{{{ y_2}}^{2}+{{ y_3}}^{2}}}.$$

Using MAPLE program, we can perform the following computations. We write only  the coefficients $\Gamma^i_j$ of Barthel connection and the components $R^h_{ijk}$ of the h-curvature tensor $R$.

\noindent The non-vanishing coefficients of  Barthel connection $\Gamma^i_j$ are:
$$\Gamma^2_2=\frac{y_3}{x_3},\quad\quad \Gamma^2_3=\frac{y_2}{x_3},\quad\quad \Gamma^3_2=-\frac{y_2}{x_3},\quad\quad \Gamma^3_3=\frac{y_3}{x_3}.$$
The independent non-vanishing  components  of the h-curvature $R^h_{ijk}$ of   Cartan  connection  are:
$$R^1_{223}=\frac{y_1y_3}{2x_3^2(y_2^2+y_3^2)},\quad\quad R^1_{323}=-\frac{y_1y_2}{2x_3^2(y_2^2+y_3^2)},\quad\quad R^2_{123}=-\frac{y_3}{2x_3^2y_1},$$
{\vspace{-10pt}}$$R^2_{323}=-\frac{1}{2x_3^2},\quad\quad R^3_{123}=\frac{y_2}{2x_3^2y_1},\quad\quad R^3_{223}=\frac{1}{2x_3^2}.$$
Now, let $X\in \N_{{R}}$, then $X$ can be written in the form $X=X^1h_1+X^2h_2+X^3h_3$, where $X^1, X^2, X^3$  are the  components of the  vector $X$ with respect to the basis   $\{h_1, h_2, h_3\}$  of the horizontal space;  $h_i:=\frac{\partial}{\partial x^i}-\Gamma^m_i\frac{\partial}{\partial y^m}$, $i,m=1,...,3$.  The equation  ${R}(\overline{X},\overline{Y}) \overline{Z}=0$, $\forall\, Y,Z\in H(TM)$, is written locally in the form
  $X^j{R}^h_{ijk}=0.$
This is equivalent to the system of equations $X^2=0, \, X^3=0$ having the solution  $X^1=t \,(t\in \mathbb{R}), \,X^2=X^3=0$.
As $\pi_*(h_i)=\frac{\partial}{\partial x^i}$, we have
\begin{equation}\label{nullvector}
\N_R(x)=\set{t\frac{\partial}{\partial x^1}\,|\, t\in \mathbb{R}}.
\end{equation}
On the other hand, let $Z\in\text{Ker}_R$. The equation  ${R}(\overline{X},\overline{Y}) \overline{Z}=0$, $\forall\, X,Y\in H(TM)$, is written locally in the form
  $Z^i{R}^h_{ijk}=0.$
This is equivalent  to the system:
 $$y_3Z^2-y_2Z^3=0, \quad\quad y_3Z^1+y_1Z^3=0,\quad\quad y_2Z^1+y_1Z^2=0.$$
This system  has the solution $Z^1=t$,  $Z^2=-\frac{y_2}{y_1}t$ and $Z^3=-\frac{y_3}{y_1}t$, $(t\in \mathbb{R})$. Thus,
\begin{equation}\label{kervector}
\text{Ker}_R(x)=\set{t\Big(\frac{\partial}{\partial x^1}-\frac{y_2}{y_1}\frac{\partial}{\partial x^2}-\frac{y_3}{y_1}\frac{\partial}{\partial x^3}\Big)\,|\, t\in \mathbb{R}}.
\end{equation}
Comparing (\ref{nullvector}) and (\ref{kervector}), we note that there is no value of $t$ for which $\N_R(x)=\text{Ker}_R(x)$. Consequently,
$\N_R(x)$ and $ \text{Ker}_R(x)$ can not coincide. \qed

\vspace{8pt}
According to Akabr-Zadeh's proof, if $X\in \N_R$, then, by Lemma \ref{lem}, we have
$R(\overline{Y},\overline{Z}) \overline{X}={\textbf{T}}(X,[Y,Z]).$
But there is no guarantee for the vanishing of the right-hand side. Even the equation $g(R(\overline{Y},\overline{Z}) \pi_*{X},\pi_* W)=g({\textbf{T}}(X,[Y,Z]),\pi_* W)$, $W\in H(TM)$, is true only for $X\in \N_R$ and, consequently, we can not use the symmetry or skew-symmetry  properties in $X$ and $W$ to conclude that $g(R(\overline{Y},\overline{Z}) \overline{X},\overline{W})=0$. This can be assured, again,  by the previous example: if we take $X=h_1\in \N_R(z)$ and $Y=h_2,Z=h_3$, then the bracket $[Y,Z]=-\frac{y_3}{x_3^2}\frac{\partial}{\partial y_2}+\frac{y_2}{x_3^2}\frac{\partial}{\partial y_3}$ is vertical and $\textbf{T}(h_1,[h_2,h_3])=-\frac{1}{2x_3^2y_1}( y_3\bar{\partial}_2-y_2\bar{\partial}_3 )\neq 0$, where $\bar{\partial}_i$ is the basis of the fibers of the pullback  bundle.

\vspace{8pt}
As has been shown above, $\N_R$ and  $ \text{Ker}_R$ do not coincide in general. Nevertheless, we have

\begin{thm}Let $(M,F)$ be a Finsler manifold and $R$ the $h$-curvatire of Cartan connection. If
\begin{equation}\label{cyclic}
    \mathfrak{S}_{\overline{X},\overline{Y},\overline{Z}}R(\overline{X},\overline{Y})\overline{Z}=0,
 \end{equation}
then the two distributions $ \N_R$ and  $\text{Ker}_R$ coincide.
\end{thm}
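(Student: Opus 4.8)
The plan is to establish the two inclusions $\N_R\subseteq\text{Ker}_R$ and $\text{Ker}_R\subseteq\N_R$ separately, after fixing the identification under which the claim is meaningful: a horizontal vector $X\in H_z(TM)$ lies in $\N_R(z)$ exactly when $\overline{X}=\pi_*X$ is the corresponding $\pi$-vector, so that both $\pi_*(\N_R(z))$ and $\text{Ker}_R(x)$ are read as subspaces of $T_xM$ and one must show they coincide. Two structural properties of the $h$-curvature of the Cartan connection will be used throughout. First, $R$ is skew-symmetric in its first two arguments, $R(\overline{X},\overline{Y})\overline{Z}=-R(\overline{Y},\overline{X})\overline{Z}$, which is immediate from $R(\overline{X},\overline{Y})\overline{Z}=\textbf{K}(hX,hY)\overline{Z}$. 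Second, since $\nabla$ is metrical, one has $g(R(\overline{X},\overline{Y})\overline{Z},\overline{W})=-g(R(\overline{X},\overline{Y})\overline{W},\overline{Z})$.

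For the inclusion $\N_R\subseteq\text{Ker}_R$, I would take $X\in\N_R$, so that $R(\overline{X},\overline{Y})\overline{Z}=0$ for all horizontal $Y,Z$, and evaluate the hypothesis \eqref{cyclic}:
\begin{equation*}
R(\overline{X},\overline{Y})\overline{Z}+R(\overline{Y},\overline{Z})\overline{X}+R(\overline{Z},\overline{X})\overline{Y}=0.
\end{equation*}
The first term vanishes because $X$ lies in the nullity space, and the third term equals $-R(\overline{X},\overline{Z})\overline{Y}$ by skew-symmetry, which vanishes for the same reason. Hence $R(\overline{Y},\overline{Z})\overline{X}=0$ for all $Y,Z$, that is, $\overline{X}\in\text{Ker}_R$. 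This direction uses only the skew-symmetry in the first two slots together with \eqref{cyclic}.

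The reverse inclusion $\text{Ker}_R\subseteq\N_R$ is the substantial one. Starting from $\overline{X}\in\text{Ker}_R$, i.e.\ $R(\overline{Y},\overline{Z})\overline{X}=0$ for all $Y,Z$, the cyclic identity \eqref{cyclic} reduces to
\begin{equation*}
R(\overline{X},\overline{Y})\overline{Z}=R(\overline{X},\overline{Z})\overline{Y},
\end{equation*}
so that $R(\overline{X},\overline{Y})\overline{Z}$ is \emph{symmetric} under interchange of $\overline{Y}$ and $\overline{Z}$. To upgrade this symmetry to outright vanishing I would set $S(\overline{Y},\overline{Z},\overline{W}):=g(R(\overline{X},\overline{Y})\overline{Z},\overline{W})$, which by the above is symmetric in its first two slots and, by metric compatibility, skew-symmetric in its last two. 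Chaining these two properties around the three arguments (the classical argument that a tensor symmetric in one adjacent pair and antisymmetric in the other adjacent pair must be zero) yields $S\equiv 0$; since $g$ is nondegenerate, this forces $R(\overline{X},\overline{Y})\overline{Z}=0$ for all $Y,Z$, that is, $X\in\N_R$.

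The main obstacle is precisely this last algebraic step of the reverse inclusion: the first Bianchi-type hypothesis \eqref{cyclic} by itself only produces the $(\overline{Y},\overline{Z})$-symmetry, and it is the interplay with the metric skew-symmetry of $R$ that is needed to annihilate $S$. This is the Finslerian analogue of the Riemannian fact that the four standard curvature symmetries together with the first Bianchi identity imply the pair symmetry $g(R(X,Y)Z,W)=g(R(Z,W)X,Y)$. It is worth emphasizing that without \eqref{cyclic} this mechanism collapses, in agreement with the counterexample already exhibited, where the torsion term $\textbf{T}(X,[Y,Z])$ obstructs the vanishing of $R(\overline{Y},\overline{Z})\overline{X}$.
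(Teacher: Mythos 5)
Your proof is correct and follows essentially the same route as the paper: the inclusion $\N_R\subseteq\text{Ker}_R$ comes from (\ref{cyclic}) together with the skew-symmetry of $R$ in its first two slots, and the reverse inclusion from (\ref{cyclic}) combined with the metric skew-symmetry $g(R(\overline{X},\overline{Y})\overline{Z},\overline{W})=-g(R(\overline{X},\overline{Y})\overline{W},\overline{Z})$. The only difference is presentational: the paper quotes the pair-exchange symmetry $R(\overline{X},\overline{Y},\overline{Z},\overline{W})=R(\overline{Z},\overline{W},\overline{X},\overline{Y})$ as a consequence of (\ref{cyclic}), whereas you re-derive the needed vanishing directly via the equivalent ``symmetric in one adjacent pair, skew in the other'' chain argument.
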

\begin{proof}
If $X\in \N_R$, then, from  (\ref{cyclic}), we have $R(Y,Z)X=0$ and consequently $X\in \text{Ker}_R$. On the other hand, it follows also from (\ref{cyclic}) that $g(R(\overline{X},\overline{Y})\overline{Z},\overline{W})=:R(\overline{X},\overline{Y},\overline{Z},\overline{W})=
R(\overline{Z},\overline{W},\overline{X},\overline{Y})$. This proves that if $X\in \text{Ker}_R$, then $X\in \N_R$. 
\end{proof}

The following corollary shows that there are nontrivial cases in which (\ref{cyclic}) is verified and consequently the two distributions coincide.

\begin{cor}Let $(M,F)$ be a Finsler manifold and  $g$ the associated Finsler metric.\linebreak
If one of the following conditions holds:
\begin{description}
  \item[(a)] ${\widehat{R}}=0$ (the integrability condition for the horizontal distribution),
  \item[(b)]  $\widehat{R}(\overline{X},\overline{Y})=\lambda F(\ell(\overline{X})\overline{Y}-\ell(\overline{Y})\overline{X})$, where $\lambda(x,y)$ is  a homogenous function of degree $0$ in $y$ and  $\ell(\overline{X}):=F^{-1}g(\overline{X},\overline{\eta})$ (the isotropy condition),
\end{description}
then the two distributions $ \N_R$ and  $\text{Ker}_R$ coincide.
\end{cor}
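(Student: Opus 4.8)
The plan is to reduce everything to the cyclic (first-Bianchi-type) identity (\ref{cyclic}): in each of the two cases I would verify that $\mathfrak{S}_{\overline{X},\overline{Y},\overline{Z}}R(\overline{X},\overline{Y})\overline{Z}=0$, after which the preceding theorem immediately gives $\N_R=\text{Ker}_R$. So the whole task amounts to computing this cyclic sum for horizontal $X,Y,Z$ and showing it vanishes under hypothesis (a) and under hypothesis (b).

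First I would specialize Lemma \ref{lem} to horizontal arguments $X,Y,Z\in H(TM)$. Since $R(\overline{X},\overline{Y})\overline{Z}=\textbf{K}(hX,hY)\overline{Z}$, the curvature term is exactly $R(\overline{X},\overline{Y})\overline{Z}$. The Cartan connection has vanishing (h)h-torsion, i.e.\ $\textbf{T}(hX,hY)=0$ for horizontal $X,Y$, so the term $\nabla_Z\textbf{T}(X,Y)$ drops out of the cyclic sum and Lemma \ref{lem} collapses to
\[
\mathfrak{S}_{X,Y,Z}\,R(\overline{X},\overline{Y})\overline{Z}=\mathfrak{S}_{X,Y,Z}\,\textbf{T}(X,[Y,Z]).
\]
Next I would evaluate the right-hand side. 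For horizontal $Y,Z$ the bracket splits as $[Y,Z]=h[Y,Z]+v[Y,Z]$; the term $\textbf{T}(hX,h[Y,Z])$ vanishes again by the (h)h-torsion property, while by the remark preceding Lemma \ref{lem} the vertical part is governed by the contracted curvature, $K(v[Y,Z])=\pm\widehat{R}(\overline{Y},\overline{Z})$. Using the definition $T(\overline{W},\overline{X})=\textbf{T}(vW,hX)$ and identifying $v[Y,Z]$ with $\widehat{R}(\overline{Y},\overline{Z})$ through the isomorphism $K$, each summand becomes the Cartan (h)hv-torsion evaluated on $\widehat{R}$:
\[
\mathfrak{S}_{X,Y,Z}\,R(\overline{X},\overline{Y})\overline{Z}=\pm\,\mathfrak{S}_{X,Y,Z}\,T\big(\widehat{R}(\overline{Y},\overline{Z}),\overline{X}\big).
\]

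For case (a) this is immediate: $\widehat{R}=0$ forces the right-hand side to vanish, so (\ref{cyclic}) holds. For case (b) I would substitute the isotropic form $\widehat{R}(\overline{Y},\overline{Z})=\lambda F(\ell(\overline{Y})\overline{Z}-\ell(\overline{Z})\overline{Y})$, pull the scalars $\lambda F$, $\ell(\overline{Y})$, $\ell(\overline{Z})$ through the tensor $T$, and expand the cyclic sum into six terms. Grouping these in pairs and invoking the symmetry of the Cartan tensor, $T(\overline{U},\overline{V})=T(\overline{V},\overline{U})$, every term cancels, so the cyclic sum is again zero and (\ref{cyclic}) holds. In either case the preceding theorem then yields $\N_R=\text{Ker}_R$.

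The step I expect to be the main obstacle is the bookkeeping in the second display: correctly matching the sign and identification conventions when passing from the vertical part $v[Y,Z]$ of the bracket to $\widehat{R}(\overline{Y},\overline{Z})$ via $K$, and confirming that the (h)h-torsion of the Cartan connection genuinely vanishes so that only the (h)hv-piece $T(\widehat{R}(\,\cdot\,,\cdot),\cdot)$ survives. Once this identification is pinned down, the final cancellation in case (b) is a short formal manipulation resting solely on the symmetry of the Cartan tensor.
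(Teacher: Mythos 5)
Your proposal is correct and follows the same overall strategy as the paper: both arguments reduce cases (a) and (b) to the cyclic identity (\ref{cyclic}) and then invoke the preceding theorem. The difference is in how the two intermediate facts are established. For the key Bianchi-type identity $\mathfrak{S}_{\overline{X},\overline{Y},\overline{Z}}\{R(\overline{X},\overline{Y})\overline{Z}-T(\overline{X},\widehat{R}(\overline{Y},\overline{Z}))\}=0$ the paper simply cites \cite{r95}, whereas you derive it from Lemma \ref{lem} by restricting to horizontal arguments, using the vanishing of the (h)h-torsion of the Cartan connection, and identifying $v[Y,Z]$ with $\widehat{R}(\overline{Y},\overline{Z})$ through $K$; this is essentially how the cited identity is obtained, and your right-hand side $\mathfrak{S}\,T(\widehat{R}(\overline{Y},\overline{Z}),\overline{X})$ matches the paper's $\mathfrak{S}\,T(\overline{X},\widehat{R}(\overline{Y},\overline{Z}))$ once the symmetry of the Cartan torsion in its two arguments is granted. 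For case (b) the paper again only cites \cite{Soleiman}; you instead expand the cyclic sum into six terms and cancel them in pairs, and this computation checks out: the sum collapses to $\lambda F\big(\ell(\overline{X})[T(\overline{Y},\overline{Z})-T(\overline{Z},\overline{Y})]+\ell(\overline{Y})[T(\overline{Z},\overline{X})-T(\overline{X},\overline{Z})]+\ell(\overline{Z})[T(\overline{X},\overline{Y})-T(\overline{Y},\overline{X})]\big)=0$. Two small points to tighten: the term $\nabla_Z\textbf{T}(X,Y)$ drops out because the classical torsion vanishes identically on pairs of horizontal vectors, which one should verify is compatible with the intended reading of $\nabla_Z\textbf{T}(X,Y)$ in the lemma (derivative of the section $\textbf{T}(X,Y)$ versus covariant differential of the tensor); and the symmetry $T(\overline{U},\overline{V})=T(\overline{V},\overline{U})$ is the total symmetry of the Cartan tensor, which follows from metricity together with the stated property $g(T(\overline{X},\overline{Y}),\overline{Z})=g(T(\overline{X},\overline{Z}),\overline{Y})$ rather than being literally that property. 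In short, your version is more self-contained, at the cost of carrying out computations the paper outsources to its references.
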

\begin{proof}~\par
 \noindent \textbf{(a)} We have\,     $\mathfrak{S}_{\overline{X},\overline{Y},\overline{Z}}\{R(\overline{X},\overline{Y})\overline{Z}-T(\overline{X},\widehat{R}(\overline{Y},\overline{Z}))\}=0$ \cite{r95}. Then, if ${\widehat{R}}=0$, (\ref{cyclic}) holds.

\noindent \textbf{(b)} If $\widehat{R}(\overline{X},\overline{Y})=\lambda F(\ell(\overline{X})\overline{Y}-\ell(\overline{Y})\overline{X})$, then, by \cite{Soleiman}, (\ref{cyclic}) is satisfied.
\end{proof}

\begin{rem}
\em It should be noted that the identity (\ref{cyclic}) is a sufficient condition for the validity of the identity (2.1) of \cite{akbar.nul3.}.
\end{rem}

\providecommand{\bysame}{\leavevmode\hbox
to3em{\hrulefill}\thinspace}
\providecommand{\MR}{\relax\ifhmode\unskip\space\fi MR }
\providecommand{\MRhref}[2]{%
  \href{http://www.ams.org/mathscinet-getitem?mr=#1}{#2}
} \providecommand{\href}[2]{#2}


\begin{thebibliography}{10}

\bibitem{akbar.nul3.}
H. Akbar-Zadeh,  \emph{Sur le noyau de l'op\'{e}rateure  de courbure d'une vari\'{e}t\'{e} finsl\'{e}rienne}, C. R. Acad. Sci. Paris, S\'{e}r. A, \textbf{272} (1971), 807--810.

\bibitem{akbar.null.2}
H. Akbar-Zadeh,  \emph{Espaces de nullit\'{e} en g\'{e}om\'{e}trie Finsl\'{e}rienne}, Tensor, N. S., \textbf{26} (1972), 89–-101.

\bibitem{bidabad}
B. Bidabad and M. Refie-Rad, \emph{On the k-nullity foliations in Finsler geometry and completeness}, Bull. Iranian Math. Soc., \textbf{37},  \textbf{4} (2011),  1-18.
ArXiv: 1101.1496 [math. DG].

\bibitem{Soleiman}
A. Soleiman, \emph{On Akbar-Zadeh's theorem on a Finsler space of constant curvature}. ArXiv: 1201.2012 [math.DG].

\bibitem{Nabil.2}
Nabil L. Youssef, \emph{Distribution de nullit\'{e} du tensor de courbure
d'une connexion}, C. R. Acad. Sci. Paris, S\'{e}r. A,
\textbf{290 }(1980), 653--656.

\bibitem{r94}
Nabil~L. Youssef, S.~H. Abed and A.~Soleiman, \emph{A global approach to  the theory  of connections in
Finsler geometry}, Tensor, N. S., \textbf{71},\textbf{3} (2009),
187--208. ArXiv: 0801.3220 [math.DG].

\bibitem{r95}
Nabil L. Youssef, S. H. Abed and A. Soleiman, \emph{Geometric objects associated with the fundamental connections in Finsler geometry},
J. Egypt. Math. Soc., \textbf{18(1)} (2010), 67-90.

\end{thebibliography}
\end{document}